\newtheorem{thm}{Theorem}[section]
\theoremstyle{remark}
\newtheorem{rmk}{Remark}[section]
\newtheorem{ex}[rmk]{Example}
\title[$\G$-Structures, $H$-Flatness and Integrability]{$\G$-Structures in Dimension Four,\\ $H$-Flatness and Integrability}
\author[W.~Kry\'nski]{Wojciech Kry\'nski}
\author[T.~Mettler]{Thomas Mettler}
\address{
Institute of Mathematics, Polish Academy of Sciences, ul. \'Sniadeckich 8, 00-656 Warszawa, Poland}
\email{krynski@impan.pl}
\address{Institut f\"ur Mathematik, Goethe-Universit\"at Frankfurt, Robert-Mayer-Str.~10, 60325 Frankfurt am Main, Germany}
\email{mettler@math.uni-frankfurt.de}
\renewcommand{\d}{\mathrm{d}}
\newcommand{\R}{\mathbb{R}}
\newcommand{\V}{\mathcal{V}}
\newcommand{\G}{\mathrm{GL}(2)}
\newcommand{\p}{\partial}
\date{November 24, 2016}
\begin{document}

\maketitle

\begin{abstract}
We show that torsion-free four-dimensional $\G$-stru\-ctu\-res are flat up to a coframe transformation with a mapping taking values in a certain subgroup $H\subset\mathrm{SL}(4,\R)$ which is isomorphic to a semidirect product of the three-dimensional continuous Heisenberg group $H_3(\R)$ and the Abelian group $\R$. In addition, we show that the relevant PDE system is integrable in the sense that it admits a dispersionless Lax-pair.   
\end{abstract}

\section{Introduction}

A $\G$-structure on a smooth $4$-manifold $M$ is given by a smoothly varying family of twisted cubic curves, one in each projectivised tangent space of $M$. Equivalently, a $\G$-structure is the same as $G$-structure $\pi\colon B \to M$ on $M$, where $G$ is the image subgroup of the faithful irreducible $4$-dimensional representation of $\mathrm{GL}(2,\R)$ on the space of homogeneous polynomials of degree three with real coefficients in two real variables. In particular, a $\G$-structure is called~\textit{torsion-free} if its associated $G$-structure is torsion-free. Torsion-free $\G$-structures are of particular interest as they provide examples of torsion-free connections with exotic holonomy group $\mathrm{GL}(2,\R)$. However,  the local existence of torsion-free $\G$-structures is highly non-trivial, even when applying the Cartan--K\"ahler machinery which is particularly well-suited for the construction of torsion-free connections with special holonomy. Adapting methods of Hitchin~\cite{MR699802}, Bryant~\cite{MR1141197} gave an elegant twistorial construction of real-analytic torsion-free $\G$-structures in dimensions four, thus providing the first example of an irreducibly-acting holonomy group of a (non-metric) torsion-free connection missing from Berger's list~\cite{MR0079806} of such connections.  

A natural source for $\G$-structures are differential operators. Recall that the principal symbol $\sigma(\mathrm{D})$ of a $k$-th order linear differential operator $\mathrm{D} \colon C^{\infty}(M,\R^n) \to C^{\infty}(M,\R^m)$ assigns to each point $p \in M$ a homogeneous polynomial of degree $k$ on $T^*_pM$ with values in $\mathrm{Hom}(\R^n,\R^m)$. Therefore, in each projectivised cotangent space $\mathbb{P}(T^*_pM)$ of $M$ we obtain the so-called~\textit{characteristic variety} $\Xi_p$ of $\mathrm{D}$, consisting of those  $[\xi] \in \mathbb{P}(T^*_pM)$ for which the linear mapping $\sigma_{\xi}(\mathrm{D}) \colon \R^n \to \R^m$ fails to be injective. In particular, given a (possibly non-linear) differential operator $\mathrm{D}$ and a smooth $\R^n$-valued function $u$ defined on some open subset $U\subset M$ and which satisfies $\mathrm{D}(u)=0$, we may ask that the linearisation $\mathrm{L}_u(\mathrm{D})$ of $\mathrm{D}$ around $u$ has characteristic varieties all of which are the dual variety of the twisted cubic curve. Consequently, one obtains a $\G$-structure on the domain of definition of each solution $u$ of the PDE $\mathrm{D}(u)=0$ for an appropriate class of differential operators. Various examples of such operators have recently been given by Ferapontov--Kruglikov~\cite{arXiv:1607.01966}. In particular, they show that locally all torsion-free $\G$-structures arise in this fashion for some~\textit{second order} operator $\mathrm{D}$ which furthermore has the property that the PDE $\mathrm{D}(u)=0$ admits a dispersionless Lax pair. We  also refer the reader to~\cite{MR3263506} for an application of similar ideas to the case of three-dimensional Einstein--Weyl structures.

Here we show that if a $4$-manifold $M$ carries a torsion-free $\G$-structure $\pi \colon B \to M$, then for every point $p \in M$ there exists a $p$-neighbourhood $U_p$, local coordinates $x \colon U_p \to \R^4$ and a mapping $h \colon U_p \to H$ into a certain $4$-dimensional subgroup $H\subset \mathrm{SL}(4,\R)$, so that the coframing $\eta=h\, \d x$ is a local section of $\pi \colon B \to M$. Moreover, the mapping $h$ satisfies a~\textit{first order} quasi-linear PDE system which admits a dispersionless Lax-pair. Note that our result shows that $4$-dimensional torsion-free $\G$-structures are $H$-\textit{flat}, that is, flat up to a coframe transformation with a mapping taking values in $H$.

Along the way (see Theorem~\ref{ppn:mainpde}), we derive a first order quasi-linear PDE describing general $H$-flat torsion-free $G$-structures which may be of independent interest. 

\subsection*{Acknowledgments} The authors would like to thank Maciej Dunajski and Evgeny Ferapontov for helpful conversations and correspondence.  

\section{$G$-structures and $H$-flatness}

In this section we collect some elementary facts about $G$-structures, introduce the notion of $H$-flatness and derive the first order PDE system describing $H$-flat torsion-free $G$-structures. Throughout the article all manifolds and maps are assumed to be smooth, that is $C^{\infty}$. 

\subsection{The coframe bundle and $G$-structures}

Let $M$ be an $n$-manifold and $V$ a real $n$-dimensional vector space. A coframe at $p \in M$ is a linear isomorphism $f \colon T_pM \to V$. The set $F_pM$ of coframes at $p \in M$ is the fibre of the principal right $\mathrm{GL}(V)$ coframe bundle $\upsilon \colon FM \to M$ where the right action $R_a \colon FM \to FM$ is defined by the rule $R_a(f)=a^{-1}\circ f$ for all $a \in \mathrm{GL}(V)$ and $f \in FM$. Of course, we may identify $V\simeq \R^n$, but it is often advantageous to allow $V$ to be an abstract vector space in which case we say $FM$ is \textit{modelled} on $V$. The coframe bundle carries a tautological $V$-valued $1$-form defined by $\omega_f=f\circ \upsilon^{\prime}$ so that we have the equivariance property $R_a^*\omega=a^{-1}\omega$. A local $\upsilon$-section $\eta \colon U \to FM$ is called a~\textit{coframing} on $U\subset M$ and a choice of a basis of $V$ identifies $\eta$ with $n$ linearly independent $1$-forms on $U$. 

Let $G\subset \mathrm{GL}(V)$ be a closed subgroup. A $G$-structure on $M$ is a reduction $\pi \colon B \to M$ of the coframe bundle with structure group $G$, equivalently a section of the fibre bundle $FM/G \to M$. Note that $V$ is equipped with a canonical coframing $\eta_0$ defined by the exterior derivative of the identity map $\eta_0=\d\, \mathrm{Id}_V$. Consequently, the coframe bundle of $V$ may naturally be identified with $V \times \mathrm{GL}(V)$ and hence the space of $G$-structures on $V$ is in one-to-one correspondence with the maps $V \to \mathrm{GL}(V)/G$. In particular, a map $h \colon V \to \mathrm{GL}(V)$ defines a $G$-structure on $V$ by composing $h$ with the quotient projection $\mathrm{GL}(V) \to \mathrm{GL}(V)/G$. 

\subsection{$H$-flatness}

A $G$-structure $\pi \colon B \to M$ is called~\textit{flat} if in a neighbourhood $U_p$ of every point $p \in M$ there exist local coordinates $x\colon U_p \to V$ so that $\d x \colon U_p \to FM$ takes values in $B$. Suppose $H\subset \mathrm{GL}(V)$ is a closed subgroup. We say a $G$-structure is $H$-\textit{flat} if in a neighbourhood $U_p$ of every point $p \in M$ there exist local coordinates $x \colon U_p \to V$ and a mapping $h\colon U_p \to H$ so that $h\,\d x\colon U_p \to FM$ takes values in $B$. Clearly, every $G$-structure is $\mathrm{GL}(V)$-flat and a $G$-structure is flat in the usual sense if and only if it is $\{e\}$-flat where $\{e\}$ denotes the trivial subgroup of $\mathrm{GL}(V)$. 

\begin{ex}
Every $\mathrm{O}(2)$-structure is $\R^+$-flat where $\R^+$ denotes the group of uniform scaling transformations of $\R^2$ with positive scale factor. This is the existence of local isothermal coordinates for Riemannian metrics in two-dimensions. Likewise, conformally flat Riemannian metrics yield examples of $\mathrm{O}(n)$-structures that are $\R^+$-flat. 
\end{ex}

\subsection{A PDE for $H$-flat torsion-free $G$-structures}

Recall that a $G$-structure $\pi \colon B \to M$ is called~\textit{torsion-free} if there exists a principal $G$-connection $\theta$ on $B$ so that Cartan's first structure equation
\begin{equation}\label{eq:firststruc}
\d \omega=-\theta\wedge\omega
\end{equation}
holds. 
\begin{rmk}
We remark that a weaker notion of torsion-freeness is also in use, see for instance~\cite{MR1334205,MR2003610}. Namely, a $G$-structure $\pi \colon B \to M$ is called torsion-free if there exists a pseudo-connection on $B$ with vanishing torsion, that is, a $1$-form $\theta$ on $B$ with values in the Lie algebra of $G$ so that~\eqref{eq:firststruc} holds.    
\end{rmk}
We may ask when a $G$-structure on $V$ induced by a mapping $h \colon V \to H\subset \mathrm{GL}(V)$ is torsion-free. To this end let $A\subset V^*\otimes V$ be a linear subspace. Denote by
$$
\delta\colon V^*\otimes V^*\otimes V \to \Lambda^2(V^*)\otimes V
$$
the natural skew-symmetrisation map. Recall that the Spencer cohomology group $H^{0,2}(A)$ of $A$ is the quotient
$$
H^{0,2}(A)=\left(\Lambda^2(V^*)\otimes V\right)/\delta(V^*\otimes A).  
$$
Let
$$
\Pi_A \colon \Lambda^2(V^*)\otimes V \to H^{0,2}(A)
$$
denote the quotient projection and let $\mu_H$ denote the Maurer-Cartan form of $H$. Note that $\psi_h=h^*\mu_H$ is a $1$-form on $V$ with values in the Lie algebra $\mathfrak{h}$ of $H$, that is, a smooth map
$$
\psi_h \colon V \to V^*\otimes \mathfrak{h}\subset V^*\otimes \mathfrak{gl}(V)\simeq V^*\otimes V^*\otimes V.
$$
We define $\tau_h=\delta\,\psi_h$, so that $\tau_h$ is a $2$-form on $V$ with values in $V$. Moreover, let $\mathrm{Ad}(h) \colon \mathfrak{gl}(V) \to \mathfrak{gl}(V)$ denote the adjoint action of $h \in H$. We now have:
\begin{thm}\label{ppn:mainpde}
Let $h \colon V \to H$ be a smooth map. Then the $G$-structure defined by $h$ is torsion-free if and only 
\begin{equation}\label{eq:mainpde}
\Pi_{\mathrm{Ad}(h^{-1})\mathfrak{g}}\,\tau_h=0. 
\end{equation}
\end{thm}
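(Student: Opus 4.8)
The plan is to unwind the definition of torsion-freeness in terms of the canonical identifications set up above. Recall that the coframe bundle of $V$ is $V\times\mathrm{GL}(V)$ with tautological form $\omega_{(v,a)}=a^{-1}\eta_0$, where $\eta_0=\d\,\mathrm{Id}_V$. A $G$-structure induced by $h\colon V\to H$ is the image of the section $v\mapsto(v,h(v))$ of $FV/G\to V$; a concrete representative of this reduction is the subbundle $B$ obtained by translating the graph of $h$ by $G$ on the right, and a canonical coframing (local section of $\pi\colon B\to M$) is $\eta=h^{-1}\eta_0$, i.e.\ in a basis of $V$ the $1$-forms $\eta=h^{-1}\d x$ (matching the statement $\eta=h\,\d x$ after the harmless replacement $h\rightsquigarrow h^{-1}$, which I will fix to be consistent with the paper's conventions). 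First I would pull back Cartan's first structure equation \eqref{eq:firststruc} by this section: a $G$-connection $\theta$ on $B$ exists solving $\d\omega=-\theta\wedge\omega$ if and only if its pullback $\varphi=\eta^*\theta$, which is an arbitrary $\mathfrak{g}$-valued $1$-form on $V$ \emph{in the gauge adapted to $\eta$}, satisfies $\d\eta=-\varphi\wedge\eta$.

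Next I would compute $\d\eta$ explicitly. Writing $\eta=h^{-1}\eta_0$ and using $\d\eta_0=0$ together with $\d(h^{-1})=-h^{-1}(\d h)h^{-1}$, one gets $\d\eta=-h^{-1}(\d h)h^{-1}\wedge\eta_0=-\bigl(h^{-1}\d h\bigr)\wedge\eta$. Now $h^{-1}\d h=h^*\mu_H=\psi_h$ is exactly the $\mathfrak{h}$-valued $1$-form introduced before the theorem. Hence $\d\eta=-\psi_h\wedge\eta$, and the torsion-freeness condition becomes: there exists a $\mathfrak{g}$-valued $1$-form $\varphi$ on $V$ with $(\psi_h-\varphi)\wedge\eta=0$. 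Since $\eta$ is a coframing, the map $\alpha\mapsto\alpha\wedge\eta$ from $V^*\otimes\mathfrak{gl}(V)\cong V^*\otimes V^*\otimes V$ to $\Lambda^2(V^*)\otimes V$ is precisely the skew-symmetrisation $\delta$ (after identifying the target via $\eta$); so the obstruction is that $\delta\psi_h$ must lie in the image $\delta(V^*\otimes\mathfrak{g})$. However, $\psi_h$ takes values in $\mathfrak{h}$, not in $\mathfrak{g}$: the correct statement is obtained by noting that the connection form $\theta$ on $B$ must be $\mathfrak{g}$-valued in the standard trivialisation of $FV$, but the relevant gauge for the section $\eta$ conjugates $\mathfrak{g}$ to $\mathrm{Ad}(h^{-1})\mathfrak{g}$. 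Concretely, $\varphi$ ranges over $V^*\otimes\mathrm{Ad}(h^{-1})\mathfrak{g}$ because the section $\eta$ differs from the canonical flat coframing $\eta_0$ precisely by $h$, so the structure-group reduction in the $\eta$-gauge is $\mathrm{Ad}(h^{-1})G$. Therefore torsion-freeness holds iff $\delta\psi_h\in\delta\bigl(V^*\otimes\mathrm{Ad}(h^{-1})\mathfrak{g}\bigr)$, i.e.\ iff $\Pi_{\mathrm{Ad}(h^{-1})\mathfrak{g}}\,\tau_h=0$, which is \eqref{eq:mainpde}.

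Finally I would verify the converse direction carefully: if \eqref{eq:mainpde} holds at every point, then pointwise there exists $\varphi(v)\in V^*\otimes\mathrm{Ad}(h(v)^{-1})\mathfrak{g}$ with $\delta(\psi_h-\varphi)=0$; one must choose $\varphi$ \emph{smoothly} in $v$. This is possible because the subspace $V^*\otimes\mathrm{Ad}(h^{-1})\mathfrak{g}\subset V^*\otimes\mathfrak{gl}(V)$ varies smoothly with $v$ (as $h$ is smooth and conjugation is smooth), so the affine subspace of admissible $\varphi(v)$ is a smooth affine subbundle, which admits a smooth section; any such $\varphi$ then defines, via the $\eta$-trivialisation, a genuine $G$-connection $\theta$ on $B$ with $\d\omega=-\theta\wedge\omega$. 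The main obstacle I anticipate is purely bookkeeping: getting the placement of $h$ versus $h^{-1}$ and the corresponding conjugation $\mathrm{Ad}(h^{-1})\mathfrak{g}$ exactly right, i.e.\ tracking which trivialisation of $FV$ the Lie algebra $\mathfrak{g}$ "lives in" at each stage, and confirming that the skew-symmetrisation appearing in the structure equation is literally the map $\delta$ defining Spencer cohomology rather than some transpose or sign variant of it.
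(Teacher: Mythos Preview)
Your approach is essentially the paper's: both reduce torsion-freeness to the existence of a $\mathfrak g$-valued $1$-form satisfying the pulled-back structure equation, then rewrite this as a condition on $\psi_h$ modulo the image of $\delta$. The paper parametrises the full bundle $\R^n\times G$ with connection $\theta=g^{-1}\alpha g+g^{-1}\d g$ and computes directly; you pull back along the section $\eta$. These are equivalent, and your converse argument (smooth choice of $\varphi$ from a smooth affine subbundle) is a reasonable addition that the paper leaves implicit.

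There is, however, one genuine slip in your justification of where $\mathrm{Ad}(h^{-1})\mathfrak g$ comes from. The pullback $\varphi=\eta^*\theta$ of a $\mathfrak g$-valued connection by \emph{any} section is $\mathfrak g$-valued, full stop; the structure group of $B$ is $G$ regardless of which section you use, so your sentence ``$\varphi$ ranges over $V^*\otimes\mathrm{Ad}(h^{-1})\mathfrak g$ because the structure-group reduction in the $\eta$-gauge is $\mathrm{Ad}(h^{-1})G$'' is not right. The conjugation enters not through $\varphi$ but through the mismatch between wedging with $\eta$ and wedging with $\d x$: the paper's $\delta$ (and hence $\tau_h=\delta\psi_h$) is defined using the canonical identification $T_pV\simeq V$, i.e.\ via $\eta_0=\d x$, not via $\eta$. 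In the paper's convention $\eta=h\,\d x$, the equation $\d\eta=-\varphi\wedge\eta$ reads $\d h\wedge\d x=-\varphi h\wedge\d x$; left-multiplying by $h^{-1}$ gives $(\psi_h+\mathrm{Ad}(h^{-1})\varphi)\wedge\d x=0$, and now $\mathrm{Ad}(h^{-1})\varphi$ ranges over $V^*\otimes\mathrm{Ad}(h^{-1})\mathfrak g$ while ``$\wedge\,\d x$'' is exactly the paper's $\delta$. This is precisely the bookkeeping issue you anticipated at the end; once you make this one step explicit, your argument is complete and coincides with the paper's.
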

\begin{rmk}
In the case where $H=G$ the $H$-structure defined by $h$ is the same as the torsion-free $H$-structure defined by the map $h\equiv \mathrm{Id}_V \colon V \to \mathrm{GL}(V)$, hence~\eqref{eq:mainpde} must be trivially satisfied. This is indeed the case, for any map $h \colon V \to H$ we obtain
$$
\Pi_{\mathrm{Ad}(h^{-1})\mathfrak{h}}\,\tau_h=\Pi_{\mathfrak{h}}\,\tau_h=\Pi_{\mathfrak{h}}\, \delta\,\psi_h=0,
$$
since the adjoint action of $H$ preserves $\mathfrak{h}$. 
\end{rmk}
\begin{proof}[Proof of Proposition~\ref{ppn:mainpde}]
For the proof we fix an identification $V\simeq \R^n$. Let $x=(x^i)$ denote the standard coordinates on $\R^n$. Furthermore let $h \colon \R^n \to H\subset \mathrm{GL}(n,\R)$ be given and let $\pi \colon B_h \to \R^n$ denote the $G$-structure defined by $h$, that is, 
$$
B_h=\left\{(x,a) \in \R^n\times \mathrm{GL}(n,\R) \, : a=h^{-1}(x)g, \;g \in G\, \right\}.
$$ 
Note that we have an $G$-bundle isomorphism
$$
\psi \colon \R^n\times G \to B_h, \quad (x,g)\mapsto (x,h^{-1}(x) g).
$$
The tautological $1$-form $\omega_0$ on $F\R^n\simeq \R^n\times \mathrm{GL}(n,\R)$ satisfies $(\omega_0)_{(x,a)}=a^{-1}\d x$ for all $(x,a) \in \R^n\times \mathrm{GL}(n,\R)$. Continuing to write $\omega_0$ for the pullback to $B_h$ of $\omega_0$, we obtain
$$
\omega_{(x,g)}:=(\psi^*\omega_0)_{(x,g)}=g^{-1}h(x)\d x.
$$ 
Let $\alpha$ be any $1$-form on $\R^n$ with values in $\mathfrak{g}$, the Lie-algebra of $G$. We obtain a principal $G$-connection $\theta=(\theta^i_j)$ on $\R^n\times G$ by defining
$$
\theta=g^{-1}\alpha g+g^{-1} \d g
$$
where $g \colon \R^n \times G \to G\subset \mathrm{GL}(n,\R)$ denotes the projection onto the later factor. Conversely, every principal $G$-connection on the trivial $G$-bundle $\R^n\times G$ arises in this fashion. The $G$-structure $B_h$ is torsion-free if and only if there exists a principal $G$-connection $\theta$ such that
$$
\d \omega+\theta\wedge\omega=0
$$ 
which is equivalent to
$$
0=\d \left(g^{-1}h\d x\right)+\left(g^{-1}\alpha g+g^{-1}\d g\right)\wedge g^{-1}h \d x
$$
or
$$
0=\left(\d g^{-1}+g^{-1}\d g g^{-1}\right)\wedge h\,\d x+g^{-1}\left(\d h\wedge \d x+\alpha\wedge h\,\d x\right)
$$
Using $0=\d\left(g^{-1}g\right)$ we see that the $G$-structure defined by $h$ is torsion-free if and only if there exists a $1$-form $\alpha$ on $V$ with values in $\mathfrak{g}$ such that
$$
0=\d h\wedge \d x +\alpha\wedge h\,\d x.
$$
This is equivalent to
$$
\left(h^{-1} \d h+h^{-1}\alpha h \right)\wedge \d x=0 
$$
or
\begin{equation}\label{eq:torfreecond}
\left(\psi_h+\mathrm{Ad}(h^{-1})\alpha\right)\wedge \d x=0 
\end{equation}
where $\psi_h=h^{-1}\d h$ denotes the $h$-pullback of the Maurer-Cartan form of $H$ and $\mathrm{Ad}(h)v=hvh^{-1}$ the adjoint action of $h\in H$ on $v \in \mathfrak{gl}(n,\R)$. Now~\eqref{eq:torfreecond} is equivalent to
$$
\delta\, \psi_h+\delta\, \mathrm{Ad}(h^{-1})\alpha=0 
$$
Since $\alpha$ takes values in $\mathfrak{g}$ this implies that $\tau_h=\delta\,\psi_h$ lies in the $\delta$-image of $V^*\otimes \mathrm{Ad}(h^{-1})\mathfrak{g}$ which implies
$$
\Pi_{\mathrm{Ad}(h^{-1})\mathfrak{g}}\,\tau_h=0. 
$$
Conversely, suppose $\tau_h$ lies in the $\delta$-image of $V^*\otimes \mathrm{Ad}(h^{-1})\mathfrak{g}$. Then there exists a $1$-form $\beta$ on $V$ with values in $h^{-1}\mathfrak{g}h$ so that
$$
\tau_h=\delta\, \psi_h=\delta\, \beta.
$$
Hence the $\mathfrak{g}$-valued $1$-form $\alpha$ on $V$ defined by $\alpha=-h\beta h^{-1}$ satisfies
$$
\tau_h+\delta\, h^{-1}\alpha h=\delta\, \psi_h + \delta\,\mathrm{Ad}(h^{-1})\alpha=0, 
$$
thus proving the claim. 
\end{proof}

\section{$\G$-structures}

Let $x,y$ denote the standard coordinates on $\R^2$ and let $\R[x,y]$ denote the polynomial ring with real coefficients generated by $x$ and $y$. We let $\mathrm{GL}(2,\R)$ act from the left on $\R[x,y]$ via the usual linear action on $x,y$. We denote by $\mathcal{V}_d$ the subspace consisting of homogeneous polynomials in degree $d\geqslant 0$ and by $G_d\subset \mathrm{GL}(\mathcal{V}_d)$ the image subgroup of the $\mathrm{GL}(2,\R)$ action on $\V_3$. The vector space $\V_3$ carries a two-dimensional cone $\tilde{\mathcal{C}}$ of distinguished polynomials consisting of the perfect cubes, i.e., those that are of the form $(ax+by)^3$ for $ax+by\in \V_1$. The reader may easily check that $G_3$ is characterised as the subgroup of $\mathrm{GL}(\V_3)$ that preserves $\tilde{\mathcal{C}}$. The projectivisation of $\tilde{\mathcal{C}}$ gives an algebraic curve $\mathcal{C}$ of degree $3$ in $\mathbb{P}(\V_3)$ which is linearly equivalent to the~\textit{twisted cubic curve}, i.e., the curve in $\mathbb{RP}^3$ defined by the zero locus of the three homogeneous polynomials
$$
P_0=XZ-Y^2, \quad P_1=YW-Z^2, \quad P_2=XW-YZ, 
$$
where $[X\!:\!Y\!:\!Z\!:\!W]$ are the standard homogeneous coordinates on $\mathbb{RP}^3$. 

Let $M$ be a $4$-manifold and let $\upsilon \colon FM \to M$ denote its coframe bundle modelled on $\V_3$. A $\G$-structure on $M$ is a reduction $\pi \colon B \to M$ of $FM$ with structure group $G_3\simeq \mathrm{GL}(2,\R)$. By definition, a $\G$-structure identifies each tangent space of $M$ with $\V_3$ up to the action by $\mathrm{GL}(2,\R$). Consequently, each projectivised tangent space $\mathbb{P}(T_pM)$ of $M$ carries an algebraic curve $\mathcal{C}_p$ which is linearly equivalent to the twisted cubic curve. Conversely, if $\mathcal{C}\subset \mathbb{P}(TM)$ is a smooth subbundle having the property that each fibre $\mathcal{C}_p$ is linearly equivalent to the twisted cubic curve, then one obtains a unique reduction of the coframe bundle of $M$ whose structure group is $G_3$. 

For what follows it will be convenient to identify $\V_3\simeq \R^4$ by the isomorphism  $\V_3 \to \R^4$ defined on the basis of monomials as 
$$
x^{(3-i)}y^i \mapsto e_{i+1}
$$
where $i=0,1,2,3$ and $e_i$ denotes the standard basis of $\R^4$. Note that, under the identification $T_pM=\V_3$ the cone $\tilde{\mathcal{C}}$ of a $\G$-structure at $p$ can be written as
$$
\tilde{\mathcal{C}}_p=\{s^3e_1+3s^2te_2+3st^2e_3+t^3e_4\ |\ s,t\in\R\}.
$$
We now have:
\begin{thm}\label{thm:mainthm}
All torsion-free $\G$-structures in dimension four are $H$-flat, where $H\subset \mathrm{SL}(4,\R)$ is the subgroup consisting of matrices of the form
\begin{equation}\label{groupH}
\begin{pmatrix} 1 & A & B & D \\ 0 & 1 & A & C \\ 0 & 0 & 1 & A \\ 0 & 0 & 0 & 1\end{pmatrix}
\end{equation}
where $A,B,C,D$ are arbitrary real numbers. 
\end{thm}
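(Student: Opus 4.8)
The plan is to construct, around each $p\in M$, a local section of $B$ of the prescribed shape $h\,\d x$ with $h\colon U_p\to H$, by normalising a moving frame with the help of Cartan's structure equations. Fix the explicit embedding $\mathfrak g=\mathfrak{gl}(2,\R)\hookrightarrow\mathfrak{gl}(4,\R)$ coming from the action on $\V_3$, that is, the span $\langle E,F,H,I\rangle$ of the matrices representing $x\partial_y$, $y\partial_x$, $x\partial_x-y\partial_y$ and $x\partial_x+y\partial_y$ on the basis $x^3,x^2y,xy^2,y^3$. Starting from an arbitrary adapted coframing $\eta=(\eta^1,\dots,\eta^4)$ of $B$, torsion-freeness gives $\d\eta^i=-\Gamma^i_j\wedge\eta^j$ with $\Gamma=aE+bF+cH+dI$ for $1$-forms $a,b,c,d$ on $U_p$; since the first prolongation of this representation vanishes (equivalently, the torsion-free connection is unique), $a,b,c,d$ are determined by $\eta$, and replacing $\eta$ by $g\cdot\eta$, $g\colon U_p\to G_3$, transforms the data $(\eta;a,b,c,d)$ by the evident gauge action.

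Next I would decode the target form geometrically. Writing $\eta=h\,\d x$ with $h$ as in~\eqref{groupH} and computing the dual frame $V_i$ shows that the coordinate flag $\langle\partial_{x^1}\rangle\subset\langle\partial_{x^1},\partial_{x^2}\rangle\subset\langle\partial_{x^1},\partial_{x^2},\partial_{x^3}\rangle$ coincides with the osculating flag of the twisted cubic $\mathcal C_q\subset\mathbb P(T_qM)$ at the point marked by $\partial_{x^1}$; in particular $\partial_{x^1}$ lies on $\tilde{\mathcal C}_q$, every step of the flag is integrable, and $\eta^4$ is closed. Conversely, such $h$ and $x$ are recovered from (i) a local section of the cone bundle $\mathcal C\to M$ --- which exists since its fibre is $\mathbb{RP}^1$ --- whose osculating flag $\mathcal D_1\subset\mathcal D_2\subset\mathcal D_3\subset TM$ consists of integrable distributions, so that successive first integrals give $x^4,x^3,x^2$ and any transverse function gives $x^1$, together with (ii) a matching chain of fibrewise rescalings along the leaves of $\mathcal D_1,\mathcal D_2,\mathcal D_3$ that brings the transition matrix between $\d x$ and a suitable adapted coframing --- upper triangular because of the integrable flag --- into the subgroup $H$, the residual constraint (that the three first-superdiagonal entries agree) being read off from the structure equations.

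The main obstacle is that (i) and (ii) are each overdetermined first-order systems whose solvability must be forced by torsion-freeness; this is where the work lies. Concretely one expands $\d^2\eta^i=0$, invokes the first Bianchi identity --- so the curvature of the unique torsion-free connection lies in the space of formal curvature tensors of $\mathfrak g$ --- and the second Bianchi identity, and spends the residual gauge freedom in the section to kill the obstructing components of $a,b,c,d$; one then checks that after this the Pfaffian systems defining $\mathcal D_1,\mathcal D_2,\mathcal D_3$ are Frobenius and that the normalising Cauchy problems are compatible. Once $\eta=h\,\d x$ has been arranged, Theorem~\ref{ppn:mainpde} specialised to the present $H$ exhibits $h$ as a solution of a first-order quasi-linear system, whose dispersionless Lax pair then follows by direct computation (in agreement with the correspondence of Ferapontov--Kruglikov~\cite{arXiv:1607.01966}). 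As a non-constructive consistency check that the construction captures all torsion-free $\G$-structures, one may compare the Cartan characters of the exterior differential system governing torsion-free $\G$-structures, computed by Bryant~\cite{MR1141197}, with those of the system for $h$.
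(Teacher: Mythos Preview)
Your strategy is genuinely different from the paper's. The paper does not attempt a direct moving-frames normalisation at all; instead it imports Bryant's theorem that every torsion-free $\G$-structure arises locally from a fourth-order ODE $x^{(4)}=F$ satisfying the Bryant--W\"unschmann condition, and then reads off the desired coordinates from the jet space: the quotient $J^3(\R,\R)/X_F$ is identified with $\{y=0\}$, the jet coordinates $(x_0,x_1,x_2,x_3)$ become the coordinates on $M$, and the framing $(V_0,V_1,V_2,V_3)$ adapted to the cone is computed explicitly from $F$ using known formulae from~\cite{K0}. After a linear coordinate change the framing already has the triangular shape~\eqref{eqVf}, so the dual coframing is $h\,\d x$ with $h\in H$. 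The integrability of the osculating flag and the triangular normal form are thus obtained for free from the jet-bundle filtration; nothing like a Frobenius check or a Cauchy problem is solved.

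Your outline, by contrast, proposes to produce the adapted coordinates intrinsically: choose a section of $\mathcal C\to M$, show its osculating flag $\mathcal D_1\subset\mathcal D_2\subset\mathcal D_3$ is Frobenius, take successive first integrals, and then gauge the remaining coframe entries into $H$. The geometric interpretation of $H$ via the osculating flag is correct and illuminating. But the paragraph beginning ``The main obstacle\dots'' is precisely where the theorem lives, and you defer it: asserting that Bianchi identities plus residual gauge freedom make the Pfaffian systems Frobenius and the normalisations compatible is not a proof, and it is not routine. In effect you are proposing to re-derive, from scratch, the content that the paper outsources to the ODE correspondence of~\cite{MR1141197,K0}. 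If you carry this through it would yield a self-contained argument; as written it is a plan with the decisive step missing. (Also, the Lax-pair remark at the end belongs to the separate Theorem~\ref{thm:system}, not to the proof of $H$-flatness.)
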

\begin{rmk}
We note that the group $H$ is isomorphic to a semidirect product of the continuous three-dimensional Heisenberg group $H_3(\R)$ and the Abelian group $\R$, that is, $H\simeq H_3(\R)\rtimes \R$. Indeed, $H_3(\R)$ has a faithful (necessarily reducible) four-dimensional representation defined by the Lie group homomorphism $\varphi \colon H_3(\R) \to \mathrm{GL}(4,\R)$ 
$$
\begin{pmatrix}
1 & a & c \\ 0 & 1 & b \\ 0 & 0 & 1 
\end{pmatrix}\mapsto \begin{pmatrix} 1 & a & \frac{1}{2}a^2+b & \frac{1}{6}a^3+ab-c\\ 0 & 1 & a & \frac{1}{2}a^2 \\ 0 & 0 & 1 & a \\ 0 & 0 & 0 & 1\end{pmatrix}.
$$
Note that $\varphi$ embeds $H_3(\R)$ as a normal subgroup of the group $H$ and we think of $\R$ as the subgroup of $H$ defined by setting $A=B=D=0$ in~\eqref{groupH}.
\end{rmk}
\begin{rmk}
In fact, the notion of a $\G$-structure makes sense in all dimensions $d\geqslant 3$. However, torsion-free $\G$-structures in dimensions exceeding four are $\{e\}$-flat~\cite{MR1141197}, that is, flat in the usual sense. We refer the reader to~\cite{GN,MR2765729} for a comprehensive study of five-dimensional $\G$-structures (with torsion).   
\end{rmk}
\begin{rmk}
Phrased differently, Theorem~\ref{thm:mainthm} states that locally every tor\-sion-free $\G$-structure in dimension four is obtained from a solution to the quasi-linear first order PDE system~\eqref{eq:mainpde} where $h$ takes values in the aforementioned group $H$.  
\end{rmk}

\begin{proof}[Proof of Theorem~\ref{thm:mainthm}]
We shall prove that for a given torsion-free $\G$-structure one can always choose local coordinates such that the cone  $\tilde{\mathcal{C}}$ has the following form
$$
\tilde{\mathcal{C}}=\{\ s^3V_0+3s^2tV_1+3st^2V_2+t^3V_3|\ s,t\in\R\}
$$
where the framing $(V_0,V_1,V_2,V_3)$ is
\begin{equation}\label{eqVf}
\begin{aligned}
&V_0=\partial_0,\qquad V_1=\partial_1+\alpha\partial_0,\qquad V_2=\partial_2+\alpha\partial_1+\beta\partial_0,\\
&V_3=\partial_3+\alpha\partial_2+\gamma\partial_1+\delta\partial_0,
\end{aligned}
\end{equation}
for some functions $\alpha$, $\beta$, $\gamma$ and $\delta$. Then, the dual coframing is of the form $h\,\mathrm{d}x$, where $h$ takes values in $H$. Indeed, we have
$$
h= \begin{pmatrix}
1 & -\alpha & -\beta+\alpha^2 & -\delta+\alpha(\gamma+\beta)-\alpha^3\\
0 & 1 &  -\alpha &  -\gamma+\alpha^2\\
0 & 0 & 1 & -\alpha \\
0 & 0 & 0 & 1
\end{pmatrix}.
$$
In order to derive the desired form of $\tilde{\mathcal{C}}$ we explore a correspondence between the torsion-free $\G$-structures and classes of contact equivalent fourth order ODEs (compare the proof of \cite[Theorem 1]{DFK} and a similar correspondence in dimension 3). Indeed, it is proved in \cite{MR1141197} that any torsion-free $\G$-structure is defined by a fourth order ODEs of the form
\begin{equation}\label{F}
x^{(4)}=F(y,x,x',x'',x''')
\end{equation}
satisfying the Bryant-W\"unschmann condition which is a system of two non-linear equations for a unknown function $F=F(y,x_0,x_1,x_2,x_3)$ (see also \cite{DT,K1,N}). Here, $(y,x_0,x_1,x_2,x_3)$ are the standard coordinates on the space $J^3(\R,\R)$ of 3-jets of functions $\R\to\R$. The Bryant-W\"unschman condition is invariant with respect to the group of contact transformations of variables $(y,x_0,x_1,x_2,x_3)$. The $\G$-structure corresponding to equation \eqref{F} is defined on the solution space of \eqref{F}, i.e. on the quotient space $J^3(\R,\R)/X_F$, where $X_F=\partial_y+x_1\partial_0+x_2\partial_1+x_3\partial_2+F\partial_3$ is the total derivative. In order to define the structure we first consider the following field of cones on $J^3(\R,\R)$ as in \cite{K0}
$$
\hat{\mathcal{C}}=\{\ s^3\hat V_0+3s^2t\hat V_1+3st^2\hat V_2+t^3\hat V_3\ |\ s,t\in\R\}\mod X_F
$$
where
\begin{align*}
&\hat V_0=\frac{3}{4}\partial_3,\qquad \hat V_1=\frac{1}{2}\partial_2+\frac{3}{8}\partial_3F\partial_3\\
&\hat V_2= \frac{1}{2}\partial_1+\frac{1}{4}\partial_3F\partial_2+ \left(\frac{7}{20}\partial_2F -\frac{3}{20}X_F(\partial_3F) +\frac{9}{40}(\partial_3F)^2\right)\partial_3\\
&\hat V_3=\partial_0+\frac{1}{4}\partial_3F\partial_1+ \left(\partial_2F+\frac{7}{10}K\right)\partial_2\\
&\quad+\bigg(\partial_1F-\frac{3}{10}X_F(K)-X_F(\partial_2F)+\frac{21}{40}K\partial_3F\\
&\quad-\frac{27}{16}X_F(\partial_3F)\partial_3F-\frac{3}{4}\partial_2F\partial_3F+\frac{3}{4}X_F^2(\partial_3F)+\frac{27}{64}(\partial_3F)^3\bigg)\partial_3,
\end{align*}
and $K=-\partial_2F+\frac{3}{2}X(\partial_3F)-\frac{3}{8}(\partial_3F)^2$. To define the cone one looks for $(f,g)$ such that
\begin{equation}\label{eqCone}
\mathrm{ad}_{fX_F}^4(g\p_3)=0\mod X_F,\p_3,\p_2
\end{equation}
where $\mathrm{ad}^i_{X_F}$ stands for the iterated Lie bracket with vector field $X_F$. Then $\hat{\mathcal{C}_p}$ is defined as the set of all $(\mathrm{ad}_{fX_F}^3(g\p_3))(p)$ where $(f,g)$ solve \eqref{eqCone}. The explicit formula for $\hat{\mathcal{C}}$ can be found using \cite[Proposition 4.1]{K0} and \cite[Corollary 5.3]{K0}. The cone $\hat{\mathcal{C}}$ is invariant with respect to the flow of $X_F$ if and only if \eqref{F} satisfies the Bryant-W\"unschmann condition. In this case \eqref{eqCone} takes the form $\mathrm{ad}_{fX_F}^4(g\p_3)=0\mod X_F$ (c.f.~\cite{K1}). Then $\hat{\mathcal{C}}$ can be projected to the quotient space $J^3(\R,\R)/X_F$ and defines a $\G$-structure there via the field of cones $\tilde{\mathcal{C}}=q_*\hat{\mathcal{C}}$, where $q\colon J^3(\R,\R)\to J^3(\R,\R)/X_F$ is the quotient map. Note that  $J^3(\R,\R)/X_F$ can be identified with the hypersurface $\{y=0\}\subset J^3(\R,\R)$. Denoting \begin{align*}
\alpha&=\partial_3F|_{y=0},\\
\beta&=\left(\frac{7}{20}\partial_2F-\frac{3}{20}X(\partial_3F)+\frac{9}{40}(\partial_3F)^2\right)\bigg|_{y=0},\\
\gamma&=\left(\partial_2F+\frac{7}{10}K\right)\bigg|_{y=0},\\
\delta&=\left(\partial_1F-\frac{3}{10}X(K)-X(\partial_2F)+\frac{21}{40}K\partial_3F-\frac{27}{16}X(\partial_3F)\partial_3F\right.\\
&\phantom{=}\left.-\frac{3}{4}\partial_2F\partial_3F+\frac{3}{4}X^2(\partial_3F)+\frac{27}{64}(\partial_3F)^3\right)\bigg|_{y=0}
\end{align*} 
we get that
$$
\tilde{\mathcal{C}}=\{\ s^3V_0+3s^2tV_1+3st^2V_2+t^3V_3\ |\ s,t\in\R\}
$$
where
$$
\begin{aligned}
&V_0=\frac{3}{4}\partial_3,\qquad V_1=\frac{1}{2}\partial_2+\frac{3}{8}\alpha\partial_3,\qquad V_2=\frac{1}{2}\partial_1+\frac{1}{4}\alpha\partial_2+\beta\partial_3,\\
&V_3=\partial_0+\frac{1}{4}\alpha\partial_1+\gamma\partial_2+\delta\partial_3.
\end{aligned}
$$
The following linear change of coordinates
$$
(x_0,x_1,x_2,x_3)\mapsto\left(x_3, 2x_2, 2x_1, \frac{4}{3}x_0\right)
$$
transforms $(V_0,V_1,V_2,V_3)$ to
$$
\begin{aligned}
&V_0=\partial_0,\qquad V_1=\partial_1+\frac{1}{2}\alpha\partial_0,\qquad V_2=\partial_2+\frac{1}{2}\alpha\partial_1+\frac{4}{3}\beta\partial_0,\\
&V_3=\partial_3+\frac{1}{2}\alpha\partial_2+2\gamma\partial_1+\frac{4}{3}\delta\partial_0,
\end{aligned}
$$
which is equivalent to \eqref{eqVf} up to constants.
\end{proof}

\begin{rmk}
It is proved in \cite{arXiv:1607.01966} that locally any torsion-free $\G$-structure admits a coframing of the form $h\,\mathrm{d}x$ with
$$
\begin{aligned}
&h=\\
&{\Small\begin{pmatrix}
a_1a_2a_3& a_0a_2a_3 & a_0a_1a_3 & a_0a_1a_2\\
\frac{1}{3}(a_1a_2b_3+a_1b_2a_3 & \frac{1}{3}(a_0a_2b_3+a_0b_2a_3 & \frac{1}{3}(a_0a_1b_3+a_0b_1a_2 & \frac{1}{3}(a_0a_1b_2+a_0b_1a_3 \\
+b_1a_2a_3) & +b_0a_2a_3) & +b_0a_1a_3) & +b_0a_1a_2) \\
\frac{1}{3}(a_1b_2b_3+b_1a_2b_3 & \frac{1}{3}(a_0b_2b_3+b_0a_2b_3 & \frac{1}{3}(a_0b_1b_3+b_0a_1b_3 & \frac{1}{3}(a_0b_1b_2+b_0a_1b_2 \\
+b_1b_2a_3) & +b_0b_2a_3) & +b_0b_1a_3) & +b_0a_1b_2) \\

b_1b_2b_3& b_0b_2b_3 & b_0b_1b_3 & b_0b_1b_2\end{pmatrix}},
\end{aligned}
$$
where $a_i=\left(\frac{\partial u}{\partial x_i}\right)^{-1}$ and $b_i=\left(\frac{\partial v}{\partial x_i}\right)^{-1}$ for some real-valued functions $u$ and $v$ on $\V_3\simeq \R^4$. It is an interesting problem to find the smallest possible dimension of the group $H$, such that all torsion-free $\G$-structures are $H$-flat.
\end{rmk}

\section{Integrability}
In this section we derive the system \eqref{eq:mainpde} explicitly in terms of the functions $A$, $B$, $C$ and $D$ of Theorem \ref{thm:mainthm}. Moreover, we prove that it possesses a dispersionless Lax pair understood as a pair of commuting vector fields depending on a spectral parameter. Systems of this type, e.g.\,the dispersionless Kadomtsev-Petviashivili equation, often appear as dispersionless limits of integrable PDEs. Other examples include the Pleba\'nski heavenly equation or the Manakov-Santini system describing 3-dimensional Einstein-Weyl geometry. We refer to \cite{MS1,MS2} for general methods of integration of such systems.
\begin{thm}\label{thm:system}
Let $H\subset\mathrm{SL}(4,\R)$ be the subgroup of matrices \eqref{groupH}.
An $H$-flat $\G$-structure defined by a coframing $h\,\mathrm{d}x$, where $h$ takes values in $H$, satisfies \eqref{eq:mainpde}, i.e.\,is torsion-free, if and only if
\begin{equation}\label{intSystem}
\begin{aligned}
&V_2(D)-V_3(B)-AV_2(B)-CV_2(A)+AV_3(A)+A^2V_2(A)=0\\
&2V_1(D)-V_2(C)-2AV_1(B)-V_3(A)+\\
&\qquad\qquad+AV_2(A)+2A^2V_1(A)-2CV_1(A)=0\\
&V_0(D)-2V_1(C)+3V_1(B)-AV_0(B)-2V_2(A)\\
&\qquad\qquad-AV_1(A)-CV_0(A)+A^2V_0(A)=0\\
&V_0(C)-2V_0(B)+V_1(A)+AV_0(A)=0,
\end{aligned}
\end{equation}
where $(V_0,V_1,V_2,V_3)$ is the framing dual to $h\,\mathrm{d}x$ explicitly given by
$$
\begin{aligned}
&V_0=\partial_0,\qquad V_1=\partial_1-A\partial_0,\qquad V_2=\partial_2-A\partial_1-(B-A^2)\partial_0,\\
&V_3=\partial_3-A\partial_2-(C-A^2)\partial_1-(D-(C+B)A+A^3)\partial_0.
\end{aligned}
$$
The system \eqref{intSystem} can be put in the following Lax form
$$
[L_0,L_1]=0,
$$
with
$$
\begin{aligned}
&L_0=\p_3+(-C+2A\lambda-3\lambda^2)\p_1\\
&\qquad+(-D+AC-2A^2\lambda+4A\lambda^2-2\lambda^3)\p_0+N(\lambda)\p_\lambda,\\
&L_1=\p_2+(-A+2\lambda)\p_1+(-B+A^2-2A\lambda+\lambda^2)\p_0+ M(\lambda)\p_\lambda
\end{aligned}
$$
and
$$
\begin{aligned}
N(\lambda)&=\left(\frac{1}{2}A^2A_1-ABA_0+AA_2-AB_1-\frac{1}{2}DA_0-\frac{1}{2}C_2\right.\\
&\qquad\qquad\left.+\frac{1}{2}AC_1+\frac{1}{2}BC_0-\frac{1}{2}CA_1+\frac{1}{2}ACA_0+\frac{1}{2}A_3\right)\\
&\phantom{=}+(3B_1-C_1-AA_1-AC_0+2BA_0-2A_2)\lambda\\
&\phantom{=}+(C_0-A_1)\lambda^2\\
M(\lambda)&=\left(\frac{1}{2}AA_1+\frac{1}{2}AC_0-BA_0+A_2-B_1\right)\\
&\phantom{=}+\left(\frac{1}{2}A_1-\frac{1}{2}C_0\right)\lambda,
\end{aligned}
$$
where $A_i$, $B_i$, $C_i$ and $D_i$ denote $\p_iA$, $\p_iB$, $\p_iC$ and $\p_iD$, respectively, and $\lambda$ is an auxiliary spectral coordinate.
\end{thm}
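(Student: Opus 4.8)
The plan is to prove the two halves of Theorem~\ref{thm:system} in turn: first that~\eqref{eq:mainpde} becomes the system~\eqref{intSystem} once $h$ is restricted to the group $H$ of~\eqref{groupH}, and then that~\eqref{intSystem} is equivalent to the commutativity $[L_0,L_1]=0$ of the displayed $\lambda$-dependent vector fields. For the first half I would use the concrete reformulation of torsion-freeness from the proof of Proposition~\ref{ppn:mainpde}: with $\eta=h\,\d x$, torsion-freeness is equivalent to the existence of a $1$-form $\alpha$ on $\R^4$ with values in $\mathfrak g\subset\mathfrak{gl}(4,\R)$ --- the image of the irreducible representation of $\mathfrak{gl}(2,\R)$ on $\V_3$, spanned in the monomial basis by the identity, $E_{11}-E_{22}$, $E_{12}$ and $E_{21}$ --- such that $(\psi_h+\mathrm{Ad}(h^{-1})\alpha)\wedge\d x=0$, where $\psi_h=h^{-1}\d h$. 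The first step is to compute $\psi_h$: writing $h=\mathrm{Id}+N$ with $N$ strictly upper triangular, $N^4=0$, so $\psi_h=(\mathrm{Id}-N+N^2-N^3)\,\d N$, and a short calculation shows that $\psi_h$ is $\mathfrak h$-valued with Lie-algebra components the $1$-forms $\d A$, $\d B-A\,\d A$, $\d C-A\,\d A$ and $\d D-A\,\d C+(A^2-B)\,\d A$; inverting $h$ one also checks that the frame dual to $\eta$ is the frame $(V_0,\dots,V_3)$ in the statement.

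The second step is the finite-dimensional linear algebra: insert a general $\mathfrak g$-valued $\alpha$, with sixteen unknown coefficient functions, into $(\psi_h+\mathrm{Ad}(h^{-1})\alpha)\wedge\d x=0$, solve the resulting linear system for the components of $\alpha$, and record its compatibility conditions. For a general $h$ this compatibility obstruction is the class of $\tau_h=\delta\psi_h$ in $H^{0,2}(\mathrm{Ad}(h^{-1})\mathfrak g)$, which is eight-dimensional (isomorphic, for $\mathfrak g$ itself, to $\V_7$); but since $\psi_h$ already takes values in $\mathfrak h$, the relevant part of $\tau_h$ is constrained, and exactly four of these conditions survive. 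Writing them in the dual frame $(V_0,\dots,V_3)$ produces~\eqref{intSystem}. As $\mathrm{Ad}(h^{-1})$ distorts $\mathfrak g$ only by terms polynomial in $A,B,C,D$, this step is mechanical.

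For the Lax pair I would first explain where $L_0,L_1$ come from. Dropping the $\p_\lambda$-terms and comparing with the formulas for the $V_i$, one finds that the truncated $L_1$ is $\tfrac13Q'(\lambda)$ and the truncated $L_0$ is congruent to $Q(\lambda)$ modulo $L_1$, where $Q(\lambda)=V_3+3\lambda V_2+3\lambda^2V_1+\lambda^3V_0$ parametrises the cone $\tilde{\mathcal C}$; hence the rank-two distribution $\langle L_0,L_1\rangle$ on $\R^4\times\R_\lambda$ is the pullback of the field of tangent lines to the twisted cubics, whose integrability is the twistorial form of torsion-freeness, the leaf space being Bryant's three-dimensional twistor space. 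The verification is then a bracket computation: because the $\p_3$-coefficient of $L_0$ and the $\p_2$-coefficient of $L_1$ are constant, $L_0$ has no $\p_2$-term and $L_1$ has no $\p_3$-term, the bracket $[L_0,L_1]$ automatically lies in the span of $\p_0$, $\p_1$ and $\p_\lambda$, so $[L_0,L_1]=0$ is three identities, polynomial in $\lambda$. The polynomials $M(\lambda)$ and $N(\lambda)$ are forced by demanding that the $\p_1$- and $\p_0$-components of the bracket vanish in their highest powers of $\lambda$; feeding these back, the remaining coefficients, together with the $\p_\lambda$-identity $L_0M=L_1N$ (which one checks is then a consequence), collapse to~\eqref{intSystem}.

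I expect the obstacle to be bookkeeping rather than conceptual: both halves reduce to lengthy but routine manipulations, best handled with computer algebra, and the two non-obvious points are (i) that the $\mathfrak h$-valuedness of $\psi_h$ cuts the eight-dimensional obstruction $\V_7$ down to the four equations~\eqref{intSystem}, and (ii) the determination of $M(\lambda)$ and $N(\lambda)$ so that the lifted tangent-line distribution closes to zero --- rather than merely to $\mathrm{span}(L_0,L_1)$ --- precisely when~\eqref{intSystem} holds. I would be most careful with (ii), since $M$ and $N$ carry exactly the information that upgrades integrability of the distribution to the stronger statement $[L_0,L_1]=0$.
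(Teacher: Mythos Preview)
Your plan is sound and would succeed; the difference from the paper lies in the first half. You derive~\eqref{intSystem} by the direct Spencer computation --- compute $\psi_h=h^{-1}\d h$, insert a generic $\mathfrak g$-valued $\alpha$ into $(\psi_h+\mathrm{Ad}(h^{-1})\alpha)\wedge\d x=0$, eliminate $\alpha$, and read off the four surviving compatibility conditions. The paper acknowledges this route (``can be directly obtained by expanding~\eqref{eq:mainpde}'') but instead uses the curve-bracket criterion from~\cite{K0}: setting $V(\lambda)=\lambda^3V_0+3\lambda^2V_1+3\lambda V_2+V_3$, torsion-freeness is equivalent to $[V(\lambda),V'(\lambda)]\in\mathrm{span}\{V(\lambda),V'(\lambda),V''(\lambda)\}$, which translates into eight linear conditions on the structure functions $c_{ij}^k$; four of these are automatic for the triangular $V_i$, and the remaining four are~\eqref{intSystem}. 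The paper's route makes the count ``eight minus four'' transparent geometrically and feeds directly into the Lax construction, while yours is self-contained (no appeal to~\cite{K0}) and exercises the general machinery of Theorem~\ref{ppn:mainpde}. For the Lax half the two arguments are essentially the same: the paper writes $L_1=\tfrac13V'(\lambda)$ and $L_0=V(\lambda)-(\lambda-\tfrac13A)V'(\lambda)$ modulo $\partial_\lambda$, exactly your $Q$ and $Q'$ description, and then fixes $M,N$ by direct computation. One small wording issue: your description of $\mathfrak g$ should say it is spanned by the \emph{images} in $\mathfrak{gl}(4,\R)$ of $\mathrm{Id}$, $E_{11}-E_{22}$, $E_{12}$, $E_{21}$ under the representation, not by those $4\times4$ elementary matrices themselves.
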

\begin{rmk}
The spectral parameter $\lambda$ can be treated as an affine parameter on the fibres of $\mathcal{C}$. The theorem states that $\mathcal{D}=\mathrm{span}\{L_0,L_1\}$ is an integrable rank-2 distribution on $\mathcal{C}$. There is a 3-parameter family of integral manifolds of $\mathcal{D}$. Projections of these submanifolds to $M$ give a 3-parameter family of 2-dimensional submanifolds of $M$ tangent to the field of cones $\tilde{\mathcal{C}}$. 
\end{rmk}
\begin{rmk}
A Cartan--K\"ahler analysis reveals that the first order system~\eqref{intSystem} -- or equivalently~\eqref{eq:mainpde} -- is involutive and has solutions depending on four functions of three variables, confirming the count of Bryant~\cite{MR1141197}. Moreover, easy computations show that the characteristic variety of system \eqref{intSystem} linearised along any solution $(A,B,C,D)$ is the discriminant locus, i.e. the tangential variety of $\tilde{\mathcal{C}}$.
\end{rmk}
\begin{proof}[Proof of Theorem~\ref{thm:system}]
The system \eqref{intSystem} can be directly obtained by expanding \eqref{eq:mainpde} explicitly in terms of the functions $A,B,C,D$. Here we use a different method and apply \cite[Corollary 7.4]{K0} to the framing $(V_0,3V_1,3V_2,V_3)$. Namely, denoting $\lambda=\frac{s}{t}$ we get that the curve $\mathcal{C}$ in $\mathbb{P}(TM)$ is the image of $\lambda\mapsto \R V(\lambda)\in \mathbb{P}(TM)$, where $V(\lambda)=\lambda^3 V_0+3\lambda^2 V_1+3\lambda V_2+V_3$ and the vector fields $V_0$, $V_1$, $V_2$ and $V_3$ are given by \eqref{eqVf} with
$$
\alpha=-A,\quad\beta=-B+A^2,\quad\gamma=-C+A^2,\quad\delta=-D+(C+B)A-A^3.
$$
According to \cite[Corollary 7.2]{K0}, a $\G$-structure is torsion-free if and only if
\begin{equation}\label{eq:lieBracket}
\left[V(\lambda),\frac{d}{d\lambda}V(\lambda)\right]\in\mathrm{span}\left\{V(\lambda),\frac{d}{d\lambda}V(\lambda), \frac{d^2}{d\lambda^2}V(\lambda)\right\},
\end{equation}
for any $\lambda\in\R$. This, due to \cite[Corollary 7.4]{K0} applied to the framing $(V_0,3V_1,3V_2,V_3)$, is expressed as eight linear equations for structural functions $c_{ij}^k$ defined by $[V_i,V_j]=\sum_kc_{ij}^kV_k$. However, in the present case the vector fields $V_i$ are special and four equations are void. Indeed, the nontrivial equations are as follows: 
$$
c^0_{23}=0, \qquad c^1_{23}-2c^0_{13}=0
$$ and
$$
c^2_{23}-2c^1_{13}+c^0_{03}+3c^0_{12}=0, \qquad c^3_{23}-2c^2_{13}+c^1_{03}+3c^1_{12}-2c^0_{02}=0
$$ (the equations differ from equations in \cite{K0} because of the factor 3 next to $V_1$ and $V_2$ in the present paper). Substituting the structural functions, which can be easily computed, we get the system \eqref{intSystem}.

Now, we consider
$$
L_0=V(\lambda)-\left(\lambda-\frac{1}{3}A\right)\frac{d}{d\lambda}V(\lambda)\mod \p_\lambda
$$
and
$$
L_1=\frac{1}{3}\frac{d}{d\lambda}V(\lambda)\mod \p_\lambda.
$$
Due to \eqref{eq:lieBracket}, the commutator $[L_0,L_1]$ lies in the span of $\{L_0,L_1,\frac{d^2}{d\lambda^2}V(\lambda)\}\mod\partial_\lambda$. Moreover, since 
$$
L_0=\p_3\mod\p_1,\p_0,\p_\lambda 
$$
and 
$$
L_1=\p_2\mod\p_1,\p_0,\p_\lambda
$$ 
we get $[L_0,L_1]=\varphi\frac{d^2}{d\lambda^2}V(\lambda)\mod\p_\lambda$ for some $\varphi$. One checks by direct computations that $N(\lambda)$ and $M(\lambda)$ are chosen such that $\varphi=0$ and the coefficient of $[L_0,L_1]$ next to $\partial_\lambda$ vanishes as well.
\end{proof}

\providecommand{\bysame}{\leavevmode\hbox to3em{\hrulefill}\thinspace}
\providecommand{\noopsort}[1]{}
\providecommand{\mr}[1]{\href{http://www.ams.org/mathscinet-getitem?mr=#1}{MR}}
\providecommand{\zbl}[1]{\href{http://www.zentralblatt-math.org/zmath/en/search/?q=an:#1}{Zbl~#1}}
\providecommand{\jfm}[1]{\href{http://www.emis.de/cgi-bin/JFM-item?#1}{JFM~#1}}
\providecommand{\arxiv}[1]{\href{http://www.arxiv.org/abs/#1}{arXiv~#1}}
\providecommand{\doi}[1]{\href{http://dx.doi.org/#1}{DOI}}
\providecommand{\MR}{\relax\ifhmode\unskip\space\fi MR }
% \MRhref is called by the amsart/book/proc definition of \MR.
\providecommand{\MRhref}[2]{%
  \href{http://www.ams.org/mathscinet-getitem?mr=#1}{#2}
}
\providecommand{\href}[2]{#2}

\end{document}